\theoremstyle{plain}
\newtheorem{theorem}{Theorem}
\newtheorem{lemma}[theorem]{Lemma}
\theoremstyle{definition}
\newtheorem{conjecture}[theorem]{Conjecture}
\theoremstyle{remark}
\title{\bf King-serf duo by monochromatic paths\\ in k-edge-coloured tournaments}
\author{Krist\'of B\'erczi \qquad Attila Jo\'o\\
\small MTA-ELTE Egerv\'ary Research Group\\[-0.8ex] 
\small Budapest, Hungary\\
\small {\tt berkri@cs.elte.hu joapaat@cs.elte.hu} \\
}
\date{\dateline{July 14, 2016}{March 3, 2017}\\
\small Mathematics Subject Classifications: 05C20; 05C55; 05C63}
\begin{document}

\maketitle
The paper has been published in the Electronic Journal of Combinatorics (see \cite{berczi2016king})

\begin{abstract}
An open conjecture of Erd\H{o}s states that for every positive integer $k$ there is a (least) positive integer $f(k)$ so that whenever a 
tournament has its edges colored with $k$ colors, there exists a set $S$ of at most $f(k)$ vertices so that every vertex has a 
monochromatic path to some point in $S$. We consider a related question and show  that for every (finite or infinite) cardinal $\kappa>0$ there is a cardinal $ \lambda_\kappa  $ such that in every $\kappa$-edge-coloured tournament there exist  disjoint vertex  sets $K,S$ with total size at most $ \lambda_\kappa$ so that every vertex $ v $ has a monochromatic path of length at most two from $K$ to $v$ or from $v$ to $S$. 

\bigskip\noindent \textbf{Keywords:} Kernel by monochromatic paths; King-serf duo; Infinite graph; Tournament 
\end{abstract}

\section{Introduction}

A \textbf{tournament} $ T=(V(T), A(T)) $ is a directed graph obtained by orienting the edge set of a (possibly infinite) complete 
undirected graph. A directed cycle is called a \textbf{dicycle} for short. We use some basic set theoretic conventions.  We consider functions $ f $ as sets of ordered pairs where $ \left\langle x,y  \right\rangle\in f $ and $\left\langle x,z  \right\rangle\in f   $ imply $ y=z $. For a finite or infinite cardinal $ \kappa $ let $ \mathsf{exp}_0(\kappa)=  \kappa $ and let $\boldsymbol{\mathsf{exp}_{k+1}(\kappa)}=2^{\mathsf{exp}_{k}(\kappa)} $. Remember that a cardinal is the set of the ordinals that are smaller than itself, for example $ 3=\{ 0,1,2 \} $.  A $ \boldsymbol{\kappa} $\textbf{-edge-colouring} of a tournament $T$ is a function $c:A(T)\rightarrow \kappa$.
A \textbf{monochromatic path} is a directed path (repetition of vertices is not allowed) with edges having the same colour.   We call a dicycle \textbf{quasi-monochromatic} if all but at most one of its edges have the same colour. 

Our investigation was motivated by the following conjecture of Erd\H{o}s \cite[p. 274]{sands1982monochromatic}.

\begin{conjecture}[Erd\H{o}s] \label{conj:erdos}
For every positive integer $k$ there is a (least) positive integer $f(k)$ so that every $k$-edge-coloured finite tournament admits  a subset $S\subseteq V(T)$ of size at most $f(k)$ such that $S$ is reachable from every vertex by a monochromatic path.
\end{conjecture}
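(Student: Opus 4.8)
The plan is to attack Conjecture~\ref{conj:erdos} by induction on the number of colours, after reformulating ``reachable by a monochromatic path'' as a family of preorders. For each colour $i$ write $u \to_i w$ when there is a directed path from $u$ to $w$ all of whose arcs have colour $i$; since colour-$i$ paths concatenate, each $\to_i$ is a preorder, and for a vertex $v$ set $R^+(v)=\bigcup_{i<k}\{w : v\to_i w\}$. A set $S$ is then admissible exactly when $S\cap R^+(v)\neq\emptyset$ for every $v$, i.e.\ $S$ is reached from every vertex by a monochromatic path. The tournament hypothesis says the union of these preorders is \emph{total}: between any two vertices there is a length-one monochromatic path, so every pair is comparable in some $\to_i$.

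First I would settle the base cases, since they fix the flavour of the whole argument. For $k=1$ the condensation of $T$ into strong components is a transitive tournament; its unique sink component is reached from every vertex, and any single vertex of it works, so $f(1)=1$. For $k=2$ I would invoke the Sands--Sauer--Woodrow theorem \cite{sands1982monochromatic}: every $2$-arc-coloured digraph has a set $B$ that is independent by monochromatic paths and is reached from every vertex by a monochromatic path. In a tournament any two distinct vertices of $B$ are joined by an arc, a monochromatic path of length one, contradicting independence; hence $|B|\le 1$ and $f(2)=1$.

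For the inductive step I would try to peel off colour $k-1$. The aim is to locate a sub-tournament on which this colour can be ignored, apply the hypothesis for $k-1$ colours there to obtain an absorbing set of size at most $f(k-1)$, and then account for the colour-$(k-1)$ arcs with a bounded number of extra ``colour-$(k-1)$ sinks''. Concretely, I would look at the digraph $D$ of colour-$(k-1)$ arcs, take one vertex from each terminal strong component $C_1,\dots,C_r$ of its condensation, and hope that every vertex reaches some $C_j$ through colour $k-1$ while the residual interactions are governed by the first $k-1$ colours on a suitable induced sub-structure.

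The main obstacle, which I expect to be the real difficulty rather than a routine estimate, is to keep the resulting bound a function of $k$ alone. Both the number $r$ of terminal colour-$(k-1)$ components and the size of the sub-tournament produced for the recursion can a priori scale with $|V(T)|$, and there is no obvious way to extract a genuine $(k-1)$-coloured sub-tournament on which colour $k-1$ is truly absent. This is not an accident of the chosen peeling: the engine behind the $k=2$ case is the Sands--Sauer--Woodrow kernel, and for three or more colours such monochromatic-path kernels need not exist, so the vertex-deletion induction that controls the absorbing set for two colours has no known analogue bounding its size uniformly in $k$. Supplying such uniform control is precisely the content of Conjecture~\ref{conj:erdos}, and it is the step on which I would expect the whole plan to stand or fall.
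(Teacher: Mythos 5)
You have not produced a proof, and indeed no proof should be expected here: the statement you were asked about is Conjecture~\ref{conj:erdos}, which is an \emph{open} conjecture of Erd\H{o}s. The paper itself does not prove it either --- it is quoted only as motivation, and the authors instead prove a deliberate weakening (Theorem~\ref{thm:main}), replacing the single absorbing set $S$ by a king-serf duo $K,S$ with paths of length at most two, which they obtain via the Erd\H{o}s--Hajnal--P\'osa/Hajnal embedding theorem rather than by any induction on colours. So there is no ``paper proof'' to compare against, and any complete argument you gave would have been suspect.

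Within your own attempt, the base cases are sound: $f(1)=1$ via the unique sink component of the condensation, and $f(2)=1$ via Sands--Sauer--Woodrow plus the observation that independence by monochromatic paths in a tournament forces a singleton. But the inductive step is a genuine gap, and you have diagnosed it correctly yourself: the number $r$ of terminal strong components of the colour-$(k-1)$ digraph is not bounded by any function of $k$ (that digraph can be an arbitrary spanning subgraph of $T$, e.g.\ a perfect matching of arcs, giving linearly many terminal components), and deleting or contracting colour-$(k-1)$ arcs does not leave a tournament, so the induction hypothesis has nothing to apply to. The failure is structural, not technical: the $k=2$ case rests on the existence of kernels by monochromatic paths, which already fail for three colours (as the $f(3)\geq 3$ example in \cite{sands1982monochromatic} shows), so no peeling argument of this shape is known to close the induction. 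Your write-up is an honest and accurate map of where the difficulty lies, but it is a research plan, not a proof; the conjecture remains open even for $k=3$.
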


It is known that $f(1)=f(2)=1$, and there is an example showing that $f(3)\geq 3$ (see \cite{sands1982monochromatic}).  However, there 
is no known constant upper bound for $f(3)$, although it is conjectured to be $3$ by Erd\H{o}s. As a weakening of the original conjecture, we 
consider source-sink pairs instead of one sink set $ S $. However, we may add bounds on the length of the monochromatic paths. More 
precisely, a \textbf{king-serf duo by monochromatic paths} consists of disjoint vertex sets $K,S\subseteq V(D)$ so that every vertex $ v 
$ has a monochromatic path of length at most two from $K$ to $v$ or from $v$ to $S$. The \textbf{size} of the duo is defined as 
$|K|+|S|$. An edge $ uv $ of an edge-coloured tournament $T$ is called \textbf{forbidding} if there is no monochromatic path of length at 
most two from $v$ to $ u $.  Note that if $ T' $ is a subtournament of $ T $ containing a forbidding edge $ uv $, then $ uv $ is 
forbidding edge with respect to $ T'  $ as well. 

The main result of the paper is the following.

\begin{theorem}\label{thm:main}
For every (finite or infinite) cardinal $ \kappa $ there is a cardinal $ \lambda_\kappa \leq \mathsf{exp}_{10}(\kappa) $ such that in every $\kappa$-edge-coloured tournament there exists a king-serf duo by monochromatic paths of size at most $ \lambda_k $. For finite $ \kappa $ one can guarantee $ \lambda_\kappa \leq \kappa^{62500\kappa} $.
\end{theorem}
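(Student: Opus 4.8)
The plan is to recast the king--serf condition as a purely structural statement about a single auxiliary digraph and then to tame that digraph with partition calculus. Given a $\kappa$-edge-coloured tournament $T$, define the \emph{forbidding digraph} $F$ on $V(T)$ by placing an arc $x\to y$ exactly when the edge $x\to y$ of $T$ is forbidding. Unwinding the definitions, a vertex $v$ fails to be reached from a king $u$ by a monochromatic path of length at most two precisely when $v\to u$ is a forbidding edge, and symmetrically $v$ fails to reach a serf $s$ precisely when $s\to v$ is forbidding. Hence a pair $(K,S)$ of disjoint sets is a king--serf duo if and only if no vertex $v$ satisfies $K\subseteq N^+_F(v)$ and $S\subseteq N^-_F(v)$ simultaneously, where $N^+_F,N^-_F$ denote out- and in-neighbourhoods in $F$. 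In this language a lone king with $S=\emptyset$ works iff $F$ has a vertex of in-degree $0$, so the entire difficulty is concentrated in tournaments whose forbidding digraph has positive minimum in-degree.

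First I would record the basic colour constraint on $F$: two consecutive equally coloured edges $x\to y\to z$ form a monochromatic path of length two from $x$ to $z$, so the tournament edge joining $z$ and $x$ cannot appear in $F$ as an arc $z\to x$. In particular a directed triangle of $F$ must be rainbow, so for $\kappa\le 2$ the digraph $F$ is acyclic; more generally this ``no monochromatic shortcut'' phenomenon, applied with the witnessing middle vertex ranging over the whole vertex set, sharply restricts the dicycles and the domination patterns that $F$ can realise. I would use it to argue that on a sufficiently homogeneous set of vertices $F$ collapses to an (almost) transitive order, whose source and sink already furnish a bounded king--serf duo.

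The engine producing such homogeneous sets is the partition calculus. I would apply the Erd\H{o}s--Rado theorem in the infinite case, and explicit finite hypergraph Ramsey bounds in the finite case, to the colouring of $T$ enriched by the few auxiliary relations that record how pairs and triples interact with the witnessing two-paths. Iterating a bounded number of such homogenisations---on the edge colours, on the ``which colour realises a shortcut'' data, and on the forbidding pattern itself---drives every relevant parameter to a constant at the cost of one exponential per layer; stacking roughly ten layers yields $\lambda_\kappa\le\mathsf{exp}_{10}(\kappa)$, while tracking the finite Ramsey numbers (a homogeneous core of bounded size suffices, and $\kappa$-colour Ramsey numbers have the shape $\kappa^{O(\kappa)}$ per layer) produces the stated $\lambda_\kappa\le\kappa^{62500\kappa}$ in Theorem~\ref{thm:main}.

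The main obstacle, I expect, is not any single homogenisation but the assembly step: after extracting a large homogeneous core one must still cover the vertices lying \emph{outside} it, and a vertex can evade every king yet fail to reach any serf for colour-specific reasons. I would handle this by a two-phase argument mirroring the symmetry of the reduction: first choose $K$ so that the residual set $U=\{v:K\subseteq N^+_F(v)\}$ of potentially ``king-bad'' vertices is itself homogeneous, and then, working inside $U$, produce a bounded serf set $S$ with $S\not\subseteq N^-_F(v)$ for every $v\in U$. Bounding $|U|$ and proving that $U$ inherits enough homogeneity to admit a small $S$---so that the two phases do not inflate each other's bounds beyond the advertised tower, and so that exactly the right number of Ramsey layers is consumed---is where the bulk of the technical work will lie.
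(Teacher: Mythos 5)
Your opening reformulation is exactly right and coincides with the paper's: $(K,S)$ is a king--serf duo iff no vertex $v$ has a forbidding edge to every member of $K$ and a forbidding edge from every member of $S$, and the ``no monochromatic shortcut'' observation (two consecutive equally coloured arcs $x\to y\to z$ of $T$ kill the arc $z\to x$ of $F$) is precisely the contradiction the paper extracts at the very end. But the engine you propose --- iterated Ramsey/Erd\H{o}s--Rado homogenisation of $T$ itself, followed by a two-phase assembly --- is not carried out, and the step you yourself flag as ``where the bulk of the technical work will lie'' is the actual content of the theorem. A homogeneous core of $T$ tells you nothing about the vertices outside it, the set $U$ of king-bad vertices has no a priori size bound and no reason to inherit homogeneity, and the claim that $F$ ``collapses to an almost transitive order'' on a homogeneous set is asserted, not proved (even your side remark that $F$ is acyclic for $\kappa\le 2$ does not follow from the absence of directed triangles, since $F$ is only a subdigraph of a tournament). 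So as written there is a genuine gap: no mechanism converts ``$T$ has no small duo'' into a structure you can refute.

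The paper closes this gap with two ideas absent from your sketch. First, the quantifiers are reversed: one fixes \emph{in advance} an auxiliary tournament $T_\kappa$ (built from the Erd\H{o}s--Hajnal--P\'osa/Hajnal \emph{induced} monochromatic subgraph theorem applied to $H=C_5$, with edges of the host graph oriented forward along a well-order and all non-edges backward) such that \emph{every} $\kappa$-edge-colouring of $T_\kappa$ contains a quasi-monochromatic directed triangle. Plain Erd\H{o}s--Rado is not enough here: one needs the non-edges of the induced $C_5$ to become backward arcs so that a directed two-path on the cycle closes into a triangle, which is why the induced-embedding theorem is invoked and why the constants are $\mathsf{exp}_{5+5}(\kappa)$ and $\kappa^{500\cdot 5^3\cdot\kappa}=\kappa^{62500\kappa}$ --- a \emph{single} application with $|V(H)|=5$, not ten stacked homogenisation layers (ten layers of $\kappa^{O(\kappa)}$ would overshoot that bound in the finite case). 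Second, assuming $T$ has no duo of size $\lambda_\kappa=|V(T_\kappa)|$, one embeds $T_\kappa$ into $T$ by transfinite greedy recursion so that every image edge is forbidding: at each step the already-placed vertices, split according to the orientation of their edges to the next vertex of $T_\kappa$, form a candidate $(K,S)$ that by assumption fails, and the witnessing vertex $v$ extends the embedding. Restricting the colouring of $T$ to this all-forbidding copy of $T_\kappa$ produces a quasi-monochromatic triangle, whose off-colour edge contradicts forbiddingness via exactly your shortcut observation. This greedy embedding is the ``assembly step'' you are missing; without it, your plan does not yield a proof.
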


The rest of the paper is organized as follows. In Section~\ref{sec:previous}, we give an overview of previous results. Theorem~\ref{thm:main} is then proved in Section~\ref{sec:main}.

\section{Previous work} \label{sec:previous}

Given a digraph $D=(V,A)$, an independent set $K\subseteq V$ is called a kernel if it is absorbing, that is,
there exists a directed edge from $K$ to $v$ for every $v\in V-K$. Kernels were introduced by Von Neumann and Morgenstern \cite{neumann44} in relation to game theory.

The concept of kernels was generalized by Galeana-S\'anchez \cite{galeana96} for edge-coloured digraphs. In the coloured case, independence and absorbency are only required by means of monochromatic paths, hence these sets are called kernels by monochromatic paths. The existence of such kernels is widely studied, see \cite{galeana98}-\cite{galeana09b}, \cite{minggang88}. The case when $K$ is an absorbing set but not necessarily independent by monochromatic paths is also of interest. Since an absorbing set always exists in a $k$-coloured digraph, a natural problem is to find one with minimum size, which motivates the conjecture of Erd\H{o}s (Conjecture~\ref{conj:erdos}). In \cite{sands1982monochromatic}, Sands, Sauer and Woodrow proved that every $2$-edge-coloured tournament admits an absorbing vertex, and also presented a $3$-edge-coloured tournament in which the minimum size of an absorbing set is $3$. They conjectured that every $3$-edge-coloured tournament without polychromatic dicycles of length $3$ has an absorbing vertex. Minggang \cite{minggang88} verified a slightly different version of the conjecture claiming that any $k$-edge-coloured tournament without polychromatic -not necessarily directed- cycles of length $3$ contains an absorbing vertex. Meanwhile, examples show that for every $k\geq 5$, there exists a $k$-edge-coloured tournament without polychromatic dicycle of length $3$ without an absorbing vertex. Galeana-S\'anchez \cite{galeana96} proved that if each directed cycle of length at most 4 in a $k$-edge-coloured tournament $T$ is quasi-monochromatic then $T$ has an absorbing vertex. In his PhD thesis \cite{bland11}, Bland provided several sufficient conditions for the existence of an absorbing vertex in a $k$-edge-coloured tournament. He also gave a sufficient condition for the existence of an absorbing set of size $3$ in $3$-edge-coloured tournaments.

Quasi-kernels are possible weakenings of kernels. An independent set $K\subseteq V$ is a quasi-kernel if for each vertex $v\in V-K$ there exists a path of length at most $2$ from $K$ to $v$ (quasi-sink sets can be defined analogously). The fundamental theorem of Chv\'atal and Lov\'asz \cite{lovasz74} shows that every finite digraph contains a quasi-kernel. In \cite{soukup09}, P.L. Erd\H{o}s and Soukup studied the existence of quasi-kernels in infinite digraphs. As the plain generalization of the Chv\'atal-Lov\'asz theorem fails even for tournaments, they considered the problem of finding a partition $V=V_1\cup V_2$ of the vertex set such that the induced subgraph $D[V_1]$ has a quasi-kernel and $D[V_2]$ has a quasi-sink. The authors conjectured that such a partition exists for any (possibly infinite) digraph. They verified that every (possibly infinite) directed graph $D=(V,A)$ contains two disjoint, independent subsets $K$ and $S$ of $V$ such that for each node $v\in V$ there exists a path of length at most $2$ from $K$ to $v$ or from $v$ to $S$, but the conjecture is still open.

The motivation of our investigations was to combine the notions of absorbing sets by monochromatic paths and that of quasi-kernels and sinks, which lead to the definition of a king-serf duo by monochromatic paths, and to prove an analogue of Conjecture~\ref{conj:erdos}.


\section{Proof of Theorem~\ref{thm:main}} \label{sec:main}

The proof relies on the following theorem due to Erd\H{o}s, Hajnal and P\'osa \cite{erdos1975strong} (finite case) and Hajnal \cite{hajnal1991embedding} (infinite case).
 
\begin{theorem}[Erd\H{o}s, Hajnal and P\'osa] \label{thm:hajnal}
For every finite simple graph $ H $ and cardinal $ \kappa>0  $ there is a simple graph $ G $ of size at most  $ \mathsf{exp}_{\left|V(H)\right|+5}(\kappa) $ (at most $ \kappa^{500\left|V(H)\right|^{3}\kappa} $ in the finite case) such that in any $ \kappa $-edge-colouring of $ G $ one can find a monochromatic induced subgraph isomorphic to $ H $.
\end{theorem}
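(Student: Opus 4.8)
The plan is to deduce this induced Ramsey statement from the partite construction of Ne\v{s}et\v{r}il and R\"odl, using ordinary multicolour Ramsey in the finite case and the Erd\H{o}s--Rado partition theorem in the infinite case as the combinatorial engine. The feature that makes the assertion strictly harder than plain Ramsey is the word \emph{induced}: we must recover a copy of $H$ in which the non-adjacencies are respected as well, so we cannot simply take $G$ to be an enormous complete graph and quote Ramsey's theorem on monochromatic cliques. Throughout, write $n=\left|V(H)\right|$ and fix an enumeration $v_1,\dots,v_n$ of $V(H)$; the target bound $\mathsf{exp}_{n+5}(\kappa)$ already signals that the construction should cost essentially one exponential per vertex of $H$.

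First I would isolate a partite version. View $H$ as an $n$-partite \emph{pattern} with one vertex in each class. The Partite Lemma I would prove states: for every $n$-partite graph $P$ with classes $X_1,\dots,X_n$ there is an $n$-partite graph $\widehat P$ with classes $\widehat X_1,\dots,\widehat X_n$ such that every $\kappa$-edge-colouring of $\widehat P$ admits a \emph{transversal} induced copy of $P$ (the image of $X_i$ lying in $\widehat X_i$) all of whose edges share one colour. Since $\widehat P$ will be a blow-up of $P$, such transversal copies are automatically induced inside $\widehat P$, so the only work is to homogenise colours. I would do this one class at a time: attach the vertices of the last class to transversals of the remaining classes according to the adjacency pattern, record on each such vertex the colour-vector it induces, and apply Ramsey's theorem to these vectors to find a large homogeneous sub-collection. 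Each homogenisation inflates the ground set by a single exponential; iterating over the $n$ classes is what drives the tower height up linearly in $n$, and in the infinite case the requisite arrow relation on $(m+1)$-tuples, with $m$ of order $n$, is furnished by Erd\H{o}s--Rado, giving ground sets of size about $\mathsf{exp}_{m}(\kappa)$.

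Next I would run the partite construction to upgrade the Partite Lemma to the full statement. Fixing a large ``picture'' whose transversal copies of the pattern are to be controlled, I would build $G$ as an iterated amalgam of blow-up copies of $\widehat P$, enumerating the copies of the pattern and, at each stage, glueing the current graph to a fresh copy of $\widehat P$ along a set of already-constructed vertices. The amalgamation is arranged so that inducedness is preserved at every glueing and so that any $\kappa$-edge-colouring of the final $G$ which avoided a monochromatic induced $H$ could be pulled back to some stage to contradict the Partite Lemma there. Multiplying the per-stage exponential blow-ups, and bounding the number of stages by the number of copies in the picture, would yield a $G$ of size at most $\mathsf{exp}_{n+5}(\kappa)$, the five extra levels being bookkeeping overhead of the amalgamation.

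The main obstacle is the sharper finite bound $\kappa^{500\,n^{3}\kappa}$, which is genuinely single-exponential and therefore does \emph{not} come out of the tower-type amalgam above. To reach it I would abandon the explicit amalgam in favour of a more economical, pseudorandom construction: take $G$ to be a suitably dense (quasi-)random graph on $\kappa^{\Theta(n^{3}\kappa)}$ vertices that is robustly rich in induced copies of $H$, and argue that no $\kappa$-edge-colouring can destroy all monochromatic induced copies at once. Making this robust richness survive the colouring --- rather than union-bounding naively over the $\kappa^{\left|E(G)\right|}$ colourings, which is far too wasteful --- and then extracting the explicit constant $500$ and exponent $n^{3}$, is the delicate part of the argument. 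The infinite case, by contrast, is comparatively clean once Erd\H{o}s--Rado is available, and there the tower bound $\mathsf{exp}_{n+5}(\kappa)$ is already essentially best possible.
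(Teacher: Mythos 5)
First, a remark on the comparison itself: the paper does not prove this statement at all. Theorem~\ref{thm:hajnal} is imported as a black box, cited from Erd\H{o}s--Hajnal--P\'osa \cite{erdos1975strong} for the finite case and Hajnal \cite{hajnal1991embedding} for the infinite case, and only its conclusion is used (in the proof of Lemma~\ref{lem:quasi}). So your proposal cannot be measured against a proof in the paper; it has to stand on its own, and there it has a genuine flaw.

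The central problem is that your Partite Lemma is false as stated. Suppose $H$ has at least two edges, say $v_iv_j$ and $v_kv_l$ with $\{i,j\}\neq\{k,l\}$; this is automatic in your setup, where each class holds exactly one vertex of $H$. Let $\widehat P$ be \emph{any} $n$-partite graph with classes $\widehat X_1,\dots,\widehat X_n$, blow-up or not. Colour every edge of $\widehat P$ joining $\widehat X_i$ to $\widehat X_j$ with colour $1$ and every other edge with colour $2$. A transversal copy of $P=H$ maps the edge $v_iv_j$ into the pair $(\widehat X_i,\widehat X_j)$ and the edge $v_kv_l$ into a different pair of classes, so the copy receives both colours: already two colours defeat the lemma, for every choice of $\widehat P$. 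This obstruction is exactly what the genuine Ne\v{s}et\v{r}il--R\"odl partite construction is built to circumvent: the number of classes is not $n=\left|V(H)\right|$ but a Ramsey number $N$ (large enough that every $\kappa$-colouring of the edges of the complete graph on $N$ vertices has a monochromatic clique of size $n$); the partite lemma homogenises one \emph{pair} of classes at a time; and what the iteration produces is not a monochromatic copy of the pattern but a copy in which the colour of an edge depends only on the pair of classes it joins. Only then does ordinary Ramsey's theorem, applied to the induced colouring of the complete graph whose vertices are the $N$ classes, locate $n$ classes spanning a monochromatic induced $H$. Your sketch omits this outer Ramsey layer entirely, and your second stage (the amalgamation ``upgrade'') inherits the defect, since it is driven by the same false lemma.

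Two further gaps are worth naming. The sharper finite bound $\kappa^{500\left|V(H)\right|^{3}\kappa}$ is precisely the part you defer (``the delicate part''), so it is asserted rather than proved; moreover, pseudorandom and random-graph routes to induced Ramsey theorems are a much later and technically heavy development, and are not a plausible source of the explicit constant attributed here to Erd\H{o}s, Hajnal and P\'osa. And the infinite case is not ``comparatively clean'': with $\kappa$ infinite, a transfinite iteration of the partite construction over all pairs of classes blows the size far past $\mathsf{exp}_{\left|V(H)\right|+5}(\kappa)$, which is one reason the infinite-colour statement remained open from 1975 until Hajnal's 1991 paper. Your closing claim that the tower bound is ``essentially best possible'' is likewise unsupported.
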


With the help of Theorem~\ref{thm:hajnal}, first we prove the following.

\begin{lemma}\label{lem:quasi}
For every cardinal $ \kappa>0 $ there exists a tournament $ T_\kappa $ of size at most $\mathsf{exp}_{10}(\kappa) $ (at most $ \kappa^{62500\kappa} $ in the finite case) such that in any $ \kappa $-edge-colouring of $ T_\kappa $ there exists a quasi-monochromatic dicycle of length three. 
\end{lemma}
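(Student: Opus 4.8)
The plan is to derive the lemma from the Erd\H{o}s--Hajnal--P\'osa theorem (Theorem~\ref{thm:hajnal}) by first recasting the statement in colour-avoidance form. A directed triangle $a\to b\to c\to a$ is quasi-monochromatic precisely when at least two of its three edges receive the same colour, so the claim that every $\kappa$-colouring of $T_\kappa$ contains a quasi-monochromatic $3$-dicycle is equivalent to the claim that no $\kappa$-colouring of $T_\kappa$ can render every directed triangle rainbow (three pairwise distinct colours). It therefore suffices to construct a tournament whose directed triangles cannot all simultaneously avoid a repeated colour.

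The mechanism for forcing a repeated colour is a monochromatic induced subgraph. I would fix a finite graph $H$ on exactly five vertices---the number $5$ is forced by the target bounds, since Theorem~\ref{thm:hajnal} with $|V(H)|=5$ produces a host graph $G$ of size at most $\mathsf{exp}_{5+5}(\kappa)=\mathsf{exp}_{10}(\kappa)$, and at most $\kappa^{500\cdot 5^{3}\cdot\kappa}=\kappa^{62500\kappa}$ in the finite case---and set $V(T_\kappa)=V(G)$, orienting the complete graph on $V(G)$ to obtain $T_\kappa$. Every $\kappa$-colouring of $T_\kappa$ restricts to a $\kappa$-colouring of $G$, so Theorem~\ref{thm:hajnal} supplies a set $S$ of five vertices with $G[S]\cong H$ whose $G$-edges all share one colour $\alpha$. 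Within $S$ the $H$-edges are monochromatic while the non-edges of $H$ carry uncontrolled colours, and a directed triangle on three vertices of $S$ is automatically quasi-monochromatic as soon as at least two of its edges are $H$-edges. Such triples are exactly the triangles and the induced paths of $H$, so the lemma follows once I locate, inside the orientation that $T_\kappa$ induces on $S$, a cyclically oriented triple carrying two $H$-edges.

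The hard part is precisely this last step. A transitively oriented quintuple contains no directed triangle at all, so against an adversary free to orient $S$ transitively no choice of $H$ can work; the equal-coloured edges are useless unless they lie on a genuine $3$-dicycle. The crux of the proof is thus to couple the orientation of $T_\kappa$ to the structure of $G$ so that every induced copy of $H$ is compelled to contain a directed triangle spanned by $H$-edges. The natural attempt is to let $H$ carry a distinguished triangle and to orient $T_\kappa$ as a blow-up of a single $3$-dicycle, partitioning $V(G)=V_0\cup V_1\cup V_2$ and directing all cross-edges cyclically $V_0\to V_1\to V_2\to V_0$, so that any transversal triple is a directed triangle; one then needs the triangle of every monochromatic copy to meet all three classes, which can be arranged by feeding a suitably partitioned graph into Theorem~\ref{thm:hajnal}. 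Checking that this orientation stays consistent on overlapping copies and that the host graph keeps its size within $\mathsf{exp}_{10}(\kappa)$ (respectively $\kappa^{62500\kappa}$) is the technical heart of the argument, and is where I expect the real work to lie.
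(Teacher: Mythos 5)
There is a genuine gap, and it sits exactly where you say you ``expect the real work to lie.'' Your reduction is sound up to that point: quasi-monochromatic $3$-dicycles are those with a repeated colour, $|V(H)|=5$ is forced by the stated bounds, and it suffices to orient the complete graph on $V(G)$ so that every monochromatic induced copy of $H$ contains a directed triangle carrying at least two $H$-edges. But the construction you sketch to achieve this does not work. Theorem~\ref{thm:hajnal} gives no control over \emph{where} in $G$ the monochromatic induced copy of $H$ lands; it only asserts existence of some induced copy somewhere. So if you orient $T_\kappa$ as a blow-up of a $3$-dicycle over a partition $V_0\cup V_1\cup V_2$, nothing forces the distinguished triangle of the copy (or any triple of it) to be transversal to the partition, and a non-transversal triple has an orientation you have not constrained at all. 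There is no way to ``feed a suitably partitioned graph into Theorem~\ref{thm:hajnal}'': the theorem quantifies over arbitrary colourings but produces an arbitrary induced copy, and that asymmetry kills the partition idea. You also leave the edges inside each $V_i$ unoriented.

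The missing idea is to couple the orientation to the edge set of $G$ itself rather than to a vertex partition: fix a well-ordering of $V(G)$, orient every edge of $G$ \emph{forward} and every non-edge of $G$ \emph{backward}, and take $H=C_5$. In a monochromatic induced copy of $C_5$ the five cycle edges are forward edges of one colour, and since the cycle has odd length some vertex $v$ has one in-edge $uv$ and one out-edge $vw$ on the cycle (an orientation of an odd cycle cannot make every vertex a source or a sink). Then $u<v<w$ in the well-order, and because the copy is induced and $u,w$ are at distance two on $C_5$, the pair $\{u,w\}$ is a non-edge of $G$, hence oriented backward as $w\to u$. This closes the dicycle $u\to v\to w\to u$, two of whose edges share a colour. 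The parity argument is also why $5$ is the right size: $C_3$ fails because three forward edges form a transitive triangle, and $C_4$ fails because an even cycle can be oriented with alternating sources and sinks, leaving no directed path of length two. Your write-up correctly diagnoses the obstruction (a transitive quintuple has no directed triangle) but does not supply the mechanism that evades it.
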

\begin{proof}
Pick a graph $G$ ensured by Theorem~\ref{thm:hajnal} for $\kappa$ and $H=C_5$, that is, a cycle of length $5$. Fix a well-ordering of $V(G)$. Let $T_\kappa$ denote the tournament obtained by orienting the edges of $G$ forward according to the ordering, and by adding all missing edges as backward edges. We claim that $T_\kappa$ satisfies the conditions of the lemma.

Take an arbitrary $ \kappa $-edge-colouring of $ T_\kappa$. The choice of $G$ implies that there is a monochromatic (not necessarily directed) cycle $C$ of length $5$ in the graph such that $A(C)$ consists of forward edges, and all the other edges induced by $ V(C) $ in $ T_\kappa $ are backward edges.

No matter how the edges of $C$ are oriented, we can always find a directed path of length two in $A(C)$. Take such a path, say $uv$ and $vw$. These edges together with $wu$ form a quasi-monochromatic dicycle, concluding the proof of the lemma. 
\end{proof}

We claim that $\lambda_\kappa := |V(T_\kappa)| $ satisfies the conditions of the theorem. Suppose to the contrary that there exists a $ \kappa $-edge-coloured tournament $T$ not containing a king-serf duo by monochromatic paths of size at most $ \lambda_\kappa $. Let $ T_\kappa $ be a tournament that we obtain by applying Lemma~\ref{lem:quasi}.

\begin{lemma}\label{lem:forbidden}
$T$ has a subtournament isomorphic to $ T_\kappa $ consisting of forbidding edges.
\end{lemma}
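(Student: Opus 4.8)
The plan is to build the required copy of $T_\kappa$ by a \emph{transfinite greedy embedding} along the well-ordering of $V(T_\kappa)$ fixed in the construction of $T_\kappa$ (the one under which forward edges go from smaller to larger vertices and backward edges from larger to smaller). At each stage the hypothesis that $T$ has no king-serf duo of size at most $\lambda_\kappa$ will hand us the next vertex of the copy, together with exactly the forbidding edges we need.

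The heart of the matter is a dictionary between forbidding edges and the \emph{failure} of the king-serf property, which I would isolate first. Suppose $K,S\subseteq V(T)$ are disjoint and do not form a king-serf duo. Then some vertex $v$ has no monochromatic path of length at most two from $K$ to $v$ and none from $v$ to $S$. Unwinding this in the tournament $T$: for each $k\in K$ the absence of a monochromatic $\le 2$ path from $k$ to $v$ forces $v\to k$ and rules out a monochromatic $k\to x\to v$, which is precisely that $vk$ is a forbidding edge; symmetrically, for each $s\in S$ we get $s\to v$ with $sv$ forbidding. Thus $v$ dominates all of $K$ and is dominated by all of $S$, in every case through a forbidding edge. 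Moreover $v\notin K\cup S$ automatically, since $v\in K$ would give $v\to v$ and $v\in S$ would give $v\to v$.

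Now the recursion. Enumerate $V(T_\kappa)=\{v_\alpha:\alpha<\lambda_\kappa\}$ in the fixed ordering (whose order type is the initial ordinal $\lambda_\kappa$), and recall that for $\beta<\alpha$ the pair $v_\beta,v_\alpha$ spans either a forward edge $v_\beta\to v_\alpha$ or a backward edge $v_\alpha\to v_\beta$. Assume $\phi$ is defined on $\{v_\beta:\beta<\alpha\}$, is orientation-preserving, and makes every edge of $T$ among the images forbidding. To place $v_\alpha$, put $S:=\{\phi(v_\beta): v_\beta\to v_\alpha \text{ is forward}\}$ and $K:=\{\phi(v_\beta): v_\alpha\to v_\beta \text{ is backward}\}$; these are disjoint with $|K|+|S|=|\alpha|<\lambda_\kappa$, so the no-duo hypothesis applies and $(K,S)$ is not a king-serf duo. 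The dictionary then produces a fresh vertex $v$ dominating $K$ and dominated by $S$ through forbidding edges, and I set $\phi(v_\alpha):=v$. Every new edge then carries the exact orientation prescribed by $T_\kappa$ (backward into $K$, forward out of $S$) and is forbidding, so the invariant persists; carrying the recursion to $\lambda_\kappa$ yields a subtournament isomorphic to $T_\kappa$ all of whose edges are forbidding.

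The step I expect to carry the real content is getting this dictionary exactly right, in particular matching the king set $K$ to the backward (out-)neighbours of $v_\alpha$ and the serf set $S$ to its forward (in-)neighbours, so that a single witness $v$ meets \emph{all} constraints imposed by the previously embedded vertices at once. The rest is routine bookkeeping: the bound $|K|+|S|=|\alpha|<\lambda_\kappa$ (using that $\lambda_\kappa$ is a cardinal) guarantees that the hypothesis excluding duos of size at most $\lambda_\kappa$ is available at every stage, and the automatic disjointness $v\notin K\cup S$ is precisely what keeps $\phi$ injective with no extra argument.
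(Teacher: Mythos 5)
Your proposal is correct and follows essentially the same argument as the paper: a transfinite recursion in which, at each successor step, the images of the out-neighbours and in-neighbours of the next vertex are taken as the candidate king and serf sets, and the failure of the king-serf property supplies the new image vertex joined to all previous ones by forbidding edges with the right orientations. The "dictionary" you isolate is exactly the unwinding the paper performs implicitly, and your remarks on injectivity and on $|K|+|S|<\lambda_\kappa$ are the same routine bookkeeping.
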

\begin{proof}
We build up the desired subtournament by transfinite recursion. Let $ V(T_\kappa)=\{ u_\gamma \}_{\gamma< \left|V(T_\kappa)\right|} $. Assume that for some $ \alpha<\left|V(T_\kappa)\right| $ we have already found an $\subset$-increasing chain $ \left\langle f_\beta: \beta<\alpha  \right\rangle $ of $ T_\kappa \rightarrow T $ embeddings where 
$ \mathsf{dom}(f_\beta)=\{ u_\gamma \}_{\gamma<\beta} $ and the images of the edges of $ T_\kappa $ are forbidding edges of $ T $. If $ 
\beta $ is a limit ordinal, we may simply take $ f_\beta:= \bigcup_{\gamma<\beta}f_\gamma $ to keep the conditions. Assume that $ 
\beta=\delta+1 $. Let $ O =\{\gamma<\delta:  u_\delta u_\gamma\in A(T_\kappa)  \} $. 
As $T$ is a counterexample, the sets $ K:= \{ f_\delta(u_\gamma) \}_{\gamma\in O} $ and $ S:= \{ f_\delta(u_\gamma) \}_{\gamma \in \delta 
\setminus O} $ cannot form a king-serf duo by monochromatic paths. Therefore there is a vertex $ v\in V(T) $ such that there is a 
forbidding edge from $v$ to every element of $ K $, and there is a forbidding edge from every element of $ S $ to $v$. But then $ 
f_{\delta+1}:= f_\delta\cup \{ \left\langle u_\delta,v  \right\rangle \} $ maintains the conditions. Finally, the image of $ 
f:=\bigcup_{\gamma<\left|V(T_\kappa)\right|} $ gives the desired copy of $ T_\kappa $.
\end{proof}

The $\kappa$-edge-colouring of $T$ defines a $ \kappa $-edge-colouring of its $ T_\kappa $ subgraph as well. Therefore, by the choice of $T_\kappa$, there is a quasi-monochromatic dicycle $ C $ of length three in $ T_\kappa $. Let $ uv $ denote the edge of $ C $ with different colour than the others if $C$ contains two colours, and let $uv$ be an arbitrary edge of $C$ if it is monochromatic. Then $ C-uv $ is a monochromatic path of length two from $ v $ to $ u $, contradicting $ uv $ being a forbidding edge of $ T$. This finishes the proof of Theorem~\ref{thm:main}.

\subsection*{Acknowledgements}

The authors are supported by the Hungarian National Research, Development and Innovation Office -- NKFIH
grants K109240

\bibliographystyle{plain} 

\end{document}